\documentclass[12pt,reqno]{amsart}
\usepackage{yhmath}
\usepackage{amsrefs}
\usepackage{fullpage}
\usepackage{times}
\usepackage{amsmath,amssymb,amsthm,url}
\usepackage[utf8]{inputenc}
\usepackage[english]{babel}
\usepackage{bm}
\usepackage{graphicx}
\usepackage[colorlinks=true, pdfstartview=FitH, linkcolor=blue, citecolor=blue, urlcolor=blue]{hyperref}
\usepackage{tikz}
\usepackage{tikz-dimline}
\usetikzlibrary{angles, quotes}

\newtheorem{theorem}{Theorem} % [section]
\newtheorem{lemma}[theorem]{Lemma}
\newtheorem{corollary}[theorem]{Corollary}
\newtheorem{proposition}[theorem]{Proposition}

\theoremstyle{definition}
\newtheorem{definition}[theorem]{Definition}
\newtheorem*{remark}{Remark}

\newcommand{\C}{{\mathbb C}}

\newcommand{\Z}{{\mathbb Z}}

\renewcommand{\mod}[1]{{\ifmmode\text{\rm\ (mod~$#1$)}\else\discretionary{}{}{\hbox{ }}\rm(mod~$#1$)\fi}}
\newcommand{\ep}{\varepsilon}
\newsymbol\dnd 232D

\begin{document}

\title{Subproducts of small residue classes}
\author{Greg Martin and Amir Parvardi}
\address{Department of Mathematics \\ University of British Columbia \\ Room 121, 1984 Mathematics Road \\ Canada V6T 1Z2}
\email{gerg@math.ubc.ca}
\email{a.parvardi@gmail.com}
\subjclass[2010]{11A07, 11L40}
\maketitle

\begin{abstract}
For any prime~$p$, let $y(p)$ denote the smallest integer~$y$ such that every reduced residue class\mod p is represented by the product of some subset of $\{1,\dots,y\}$. It is easy to see that $y(p)$ is at least as large as the smallest quadratic nonresidue\mod p; we prove that $y(p) \ll_\ep p^{1/(4 \sqrt e)+\ep}$, thus strengthening Burgess's classical result. This result is of intermediate strength between two other results, namely Burthe's proof that the multiplicative group\mod p is generated by the integers up to $O_\ep(p^{1/(4 \sqrt e)+\ep}$, and Munsch and Shparlinski's result that every reduced residue class\mod p is represented by the product of some subset of the primes up to $O_\ep(p^{1/(4 \sqrt e)+\ep}$. Unlike the latter result, our proof is elementary and similar in structure to Burgess's proof for the least quadratic nonresidue.
%For any prime $p$, let $y(p)$ denote the smallest integer $y$ such that every reduced residue class $\pmod p$ is represented by the product of some subset of $\{1,\dots,y\}$. It is easy to see that $y(p)$ is at least as large as the smallest quadratic nonresidue $\pmod p$; we prove that $y(p) \ll_\varepsilon p^{1/(4 \sqrt e)+\ep}$, thus strengthening Burgess's classical result. This result is of intermediate strength between two other results, namely Burthe's proof that the multiplicative group $\pmod p$ is generated by the integers up to $O_\varepsilon(p^{1/(4 \sqrt e)+\varepsilon}$, and Munsch and Shparlinski's result that every reduced residue class $\pmod p$ is represented by the product of some subset of the primes up to $O_\varepsilon(p^{1/(4 \sqrt e)+\varepsilon}$. Unlike the latter result, our proof is elementary and similar in structure to Burgess's proof for the least quadratic nonresidue.
\end{abstract}

\section{Introduction}

A recurring theme in number theory is that the structure of modular multiplication is essentially independent of the natural ordering of the integers. In particular, we expect that the smallest positive integers should act, in terms of their multiplicative properties modulo a prime~$p$, essentially like randomly chosen integers. We have had some measure of success in establishing results of this kind, the prototypical one concerning small quadratic nonresidues: if we let $n_2(p)$ denote the smallest positive integer that is a quadratic nonresidue\mod p, then Burgess~\cite{burgess1} proved that $n_2(p) \ll_\ep p^{1/(4\sqrt e) + \ep}$ for any $\ep>0$.

This estimate, which is still the state of the art today, tends to be a lower bound for other related estimates involving small integers generating the multiplicative group\mod p. For example, if $g(p)$ denotes the least primitive root\mod p, then certainly $g(p) \ge n_2(p)$ since every primitive root is a quadratic nonresidue when $p\ge3$. Even more relevant to this paper, define $G(p)$ to be the smallest number~$G$ such that the integers $\{1,2,\dots,G\}$ generate the entire multiplicative group $(\Z/p\Z)^\times$. It is clear that $G(p) \ge n_2(p)$, since no set of quadratic residues\mod p can generate this entire group. Burthe~\cite{burthe} and Pollack~\cite{pollack} prove numerous interesting results on $G(p)$ and~$g(p)$; in particular, Burthe shows that $G(p) \ll_\ep p^{1/(4\sqrt e) + \ep}$ for any $\ep>0$, thus strengthening the classical result of Burgess. (It is also clear that $G(p) \le g(p)$; the best known estimate for the latter function is $g(p) \ll_\ep p^{1/4+\ep}$, also due to Burgess.)

The purpose of this paper is to further strengthen Burthe's result in the following way: instead of looking at the elements of $(\Z/p\Z)^\times$ that can be written as products of {\em powers of} the integers $\{1,\dots,G\}$, we allow only products of those integers themselves.

\begin{definition} \label{y def}
For any prime~$p$, define $y(p)$ to be the smallest number $y$ so that, for every integer~$b$ that is relatively prime to~$p$, there exists a subset of $\{1,\dots,y\}$ whose product is congruent to~$b\mod p$. Further define $S_y(b)$ to be the number of subsets of $\{1,2,\ldots, y\}$ whose product is congruent to $b\mod p$.
\end{definition}

It is clear from the definition that $y(p) \ge G(p) \ge n_2(p)$. Our main theorem is an asymptotic formula for $S_y(b)$ when $y$ is sufficiently large:

\begin{theorem} \label{Main-Theorem}
Let~$p$ be a prime, let~$b$ be an integer not divisible by~$p$, and let~$0<\ep<\frac15$ be a real number. For any integer~$y$ in the range $p^{1/(4 \sqrt e)+\ep} < y < p$,
\begin{align} \label{main asymptotic}
S_y(b) &= \frac{2^y}{p-1} + O_\ep \bigg( \frac{2^y}{p^2} \bigg).
\end{align}
\end{theorem}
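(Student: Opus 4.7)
The plan is to apply character orthogonality on $(\Z/p\Z)^\times$. Using the identity
\[
\sum_{T \subseteq \{1,\dots,y\}} \chi\!\Big(\prod_{n \in T} n\Big) = \prod_{n=1}^{y}\bigl(1+\chi(n)\bigr),
\]
one obtains
\[
S_y(b) = \frac{1}{p-1} \sum_{\chi \bmod p} \bar\chi(b)\, \mathcal F_y(\chi),
\qquad \mathcal F_y(\chi) := \prod_{n=1}^{y}\bigl(1+\chi(n)\bigr).
\]
Since $y < p$, every $n \le y$ is coprime to $p$, so $\mathcal F_y(\chi_0) = 2^y$ and the principal character produces exactly the main term $2^y/(p-1)$. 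The theorem reduces to the pointwise estimate $|\mathcal F_y(\chi)| \ll_\ep 2^y/p^{2}$ for each non-principal $\chi$, since summing over the $p-2$ such characters and dividing by $p-1$ yields an error of size $O_\ep(2^y/p^2)$.

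For a non-principal $\chi$ of order $d$, write $\chi(n) = e^{i\theta_n}$ so that $|1+\chi(n)| = 2|\cos(\theta_n/2)|$. If some $n \le y$ satisfies $\chi(n) = -1$, then $\mathcal F_y(\chi) = 0$ and we are done; this handles the quadratic character at once via Burgess's classical bound $n_2(p) \ll_\ep p^{1/(4\sqrt e)+\ep} < y$. Otherwise,
\[
|\mathcal F_y(\chi)| = 2^y \prod_{n=1}^{y} |\cos(\theta_n/2)|,
\]
and the task becomes to show $\sum_{n \le y}\log|\cos(\theta_n/2)| \le -2\log p + O(1)$. A heuristic via equidistribution of $\chi(n)$ on the $d$-th roots of unity, paired with the identity $\prod_{\zeta^d = 1}(1+\zeta) = 1 - (-1)^d$ (equal to $2$ for $d$ odd), predicts this sum to be $\approx -y(1-1/d)\log 2$, comfortably beating $-2\log p$ as soon as $y \gg \log p$.

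To turn the heuristic into a proof, I would expand
\[
\log|1+\chi(n)|^2 = 2\sum_{k=1}^{K} \frac{(-1)^{k-1}}{k}\, \mathrm{Re}\,\chi(n)^k + (\text{truncation error}),
\]
sum over $n \le y$, and split into diagonal indices $d \mid k$ (which produce the expected contribution $(2y/d)\log 2$ via $\sum_m (-1)^{m-1}/m = \log 2$) and off-diagonal ones, where $\sum_{n\le y}\chi^j(n)$ is controlled by Burgess's inequality $|\sum_{n\le y}\chi^j(n)| \ll_\ep y^{1-1/r}\,p^{(r+1)/(4r^2)+\ep}$ for arbitrary $r \ge 2$.

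The main obstacle is that Burgess's bound alone gives savings only once $y > p^{1/4 + o(1)}$, whereas the hypothesis provides merely $y > p^{1/(4\sqrt e)+\ep}$. Bridging this gap requires the Burgess--Dickman refinement that underlies the proof of $n_2(p) \ll_\ep p^{1/(4\sqrt e)+\ep}$: one balances the Burgess exponent $(r+1)/(4r^2)$ against the density of $y$-smooth integers (counted by Dickman's function $\rho$), and the optimal exponent $1/(4\sqrt e) = 1/(4u^*)$ emerges from $\rho(u^*) = 1/2$ with $u^* = \sqrt e$. Executing this optimization in the weighted setting of the present character sum, in direct analogy with Burgess's original argument that the paper seeks to strengthen, will be the most delicate step.
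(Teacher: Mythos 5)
The orthogonality reduction at the start matches the paper exactly, and your identification that everything hinges on showing $|\mathcal F_y(\chi)|\ll 2^y/p^2$ for nonprincipal $\chi$ is also the paper's reduction. From there, however, the route you propose has a genuine gap that is not a matter of polish.

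Your plan is to expand $\log|1+\chi(n)|^2$ as a power series in $\Re\chi^k(n)$, sum over $n\le y$, and control the off-diagonal sums $\sum_{n\le y}\chi^j(n)$ by Burgess. But Burgess's inequality gives no nontrivial saving on the interval $\{1,\dots,y\}$ when $y$ is only as large as $p^{1/(4\sqrt e)+\ep}$, which is well below the Burgess threshold $p^{1/4+\ep}$. You acknowledge that the Burgess--Dickman mechanism is needed to bridge this gap, but you do not execute it, and --- more importantly --- it does not slot into your framework. The Burgess--Dickman trick does not improve the Burgess exponent at length $y$; rather, it \emph{lengthens the interval}: from hypotheses about $\chi(n)$ for $n\le y$ one infers (via multiplicativity of $\chi$ on $y$-friable integers) constraints on $\chi(n)$ for $n$ up to $t=y^{\sqrt e-\ep}$, and only there, where $t>p^{1/4+\ep}$, does Burgess bite. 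Your power-series decomposition of $\sum_{n\le y}\log|1+\chi(n)|^2$ lives entirely at length $y$ and has no built-in way to exploit multiplicativity to pass to length $t$; you would need to control sums like $\sum_{n\le t,\ P(n)\le y}\chi^j(n)$, which is a different and harder object. This is precisely what the paper's combinatorial factorization lemma and the ``most values are within $1/\log p$ of~$1$'' lemma accomplish, and neither has an analogue in your outline.

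A secondary problem: the power series of $\log|1+z|^2$ is not absolutely convergent when $z$ is near $-1$. You handle $\chi(n)=-1$ exactly (the product vanishes), but characters of even order can have $\chi(n)$ arbitrarily close to $-1$ without equality, and then the truncation error in your expansion is out of control. Relatedly, your diagonal computation $\sum_{d\mid k}(-1)^{k-1}/k$ diverges for $d$ even, a sign that the heuristic ``$\approx -y(1-1/d)\log 2$'' does not apply uniformly. The paper sidesteps all of this with the elementary inequality $|1+z|\le 2e^{-|z-1|^2/8}$ for $|z|=1$, which gives a clean multiplicative saving whenever $\chi(n)$ is not close to~$1$, with no series expansion at all. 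In short: your reduction is right, but your mechanism for bounding the nonprincipal product cannot close at the stated exponent, and the step you defer (``the most delicate step'') is exactly the step that requires the paper's new ideas --- the factorization lemma for friable numbers and the passage to the longer interval $[1,y^{\sqrt e-\ep}]$.
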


The immediate corollary is the observation that our main theorem strengthens Burthe's result:

\begin{corollary} \label{main cor}
For $\ep>0$, we have $y(p) \ll_\ep p^{1/(4\sqrt e)+\ep}$.
\end{corollary}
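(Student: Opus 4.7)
The plan is to deduce Corollary \ref{main cor} as a direct positivity consequence of Theorem \ref{Main-Theorem}. The key observation is that the main term $2^y/(p-1)$ in the asymptotic \eqref{main asymptotic} is of order $2^y/p$, whereas the error term $O_\ep(2^y/p^2)$ is smaller by a factor of roughly~$p$; once $p$ is large enough (depending on $\ep$), the main term therefore dominates and $S_y(b)$ is forced to be strictly positive for every $b$ coprime to $p$.

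Concretely, I would fix $\ep \in (0, \frac15)$ and set
\[
y = \lfloor p^{1/(4\sqrt e) + \ep} \rfloor + 1,
\]
so that $p^{1/(4\sqrt e) + \ep} < y < p$ whenever $p$ exceeds a threshold $p_0(\ep)$ (the upper inequality is automatic because $\tfrac1{4\sqrt e} + \ep < 1$ given our constraint $\ep < \frac15$). Theorem \ref{Main-Theorem} then supplies
\[
S_y(b) = \frac{2^y}{p-1} + O_\ep\Bigl(\frac{2^y}{p^2}\Bigr)
\]
for every integer $b$ with $\gcd(b,p)=1$. Since the error term is a factor of $O_\ep(1/p)$ smaller than the main term, there exists a threshold $p_0(\ep)$ such that for all $p \ge p_0(\ep)$ one has $S_y(b) > 0$ simultaneously for every residue class~$b$. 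By Definition~\ref{y def} this forces $y(p) \le y \le 1 + p^{1/(4\sqrt e) + \ep}$.

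Finally, the finitely many primes $p < p_0(\ep)$ contribute only finitely many values of $y(p)$, each trivially bounded (for instance by $p-1$), so they can be absorbed into the implicit constant, yielding $y(p) \ll_\ep p^{1/(4\sqrt e) + \ep}$ uniformly in~$p$. There is no genuine obstacle in this deduction: the corollary is essentially a one-line positivity check, and the entire substantive difficulty is concentrated in Theorem~\ref{Main-Theorem} itself, whose proof will occupy the remainder of the paper.
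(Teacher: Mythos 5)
Your proposal is correct, and it spells out exactly the positivity deduction that the paper leaves implicit when it calls the corollary ``immediate'': choose $y$ just above $p^{1/(4\sqrt e)+\ep}$, note the main term $2^y/(p-1)$ dominates the error $O_\ep(2^y/p^2)$ once $p$ is large, conclude $S_y(b)>0$ for every reduced residue $b$, and absorb the finitely many small primes into the implicit constant. (The only small point worth making explicit is that for $\ep \ge \frac15$ the claim follows from the $\ep<\frac15$ case, so restricting to $\ep\in(0,\frac15)$ is harmless.)
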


\begin{remark}
Theorem~\ref{Main-Theorem} can be generalized to other sets of integers: for example, our proof is exactly the same if the set $\{1,\dots,y\}$ in Definition~\ref{y def} is replaced by any arithmetic progression of length~$y$ whose common difference is not a multiple of~$p$. (If that arithmetic progression contains a multiple of~$p$, then the $2^y$ in the main term must be replaced by $2^{y-1}$.)
\end{remark}

The motivation for this work comes from a paper of Booker and Pomerance~\cite{pomerance-booker}, in which they showed that every residue class in $(\mathbb Z/p\mathbb Z)^\times$ contains a squarefee $p$-friable integer (equivalently, a product of some set of primes less than~$p$). In other words, define $y'(p)$ to be the smallest number $y'$ so that, for every integer~$b$ that is relatively prime to~$p$, there exists a subset of $\{1,\dots,y'\}$ consisting only of primes whose product is congruent to~$b\mod p$; then Booker and Pomerance showed that $y'(p) < p$. Later Munsch and Shparlinski~\cite{munsch} improved this result to show that $y'(p) \ll_\ep p^{1/(4\sqrt e) + \ep}$, and indeed they showed that all reduced residue classes can be represented by a positive squarefree integer $s \le p^{2+\ep}$ that is $p^{1/(4\sqrt e) + \ep}$-friable. Since $y'(p) \ge y(p) \ge G(p) \ge n_2(p)$, this result is the best that can be obtained without further progress on the least quadratic nonresidue. (We note in passing that there is no {\em a priori} relationship between $g(p)$ and either $y(p)$ or $y'(p)$. A fuller exposition of this circle of related problems was given by the second author~\cite{parvardi2019}.)

It is true that Munsch and Shparlinski's result on $y'(p)$ is stronger than Theorem~\ref{Main-Theorem}; however, their proof is rather complicated, involving double Kloosterman sums with prime arguments and sieve results. The purpose of this paper is to show that the related function $y(p)$ can be bounded in an elementary fashion, one that parallels Burgess's classical proof of the estimate for $n_2(p)$. We provide more details of the parallels in Section~\ref{strategy section}. After establishing two combinatorial factorization lemmas in Section~\ref{combinatorial section} that might be of independent interest, we complete the proof of Theorem~\ref{Main-Theorem} in Section~\ref{proof section}.

As is the case for the least quadratic nonresidue~$n_2(p)$, one could also seek better upper bounds for $y(p)$ under the assumption of the generalized Riemann hypothesis, or unconditionally with a small set of exceptions; we do not undertake those variants herein.

\section{Strategy of the proof} \label{strategy section}

Let~$b$ be a reduced residue class\mod p. To detect whether a generic product $1^{j_1} 2^{j_2} \cdots y^{j_y}$ (where $j_1,\dots,j_y\in\{0,1\}$) of a subset of $\{1,\dots,y\}$ is congruent to~$b\mod p$, we use the orthogonality of Dirichlet characters to write
\[
\frac{1}{p-1} \sum_{\chi \mod p} \chi\big(1^{j_1} 2^{j_2} \cdots y^{j_y} b^{-1}\big) =
\begin{cases}
1, &\mbox{if } 1^{j_1} 2^{j_2} \cdots y^{j_y} \equiv b \mod p,\\
0, &\mbox{otherwise};
\end{cases}
\]
therefore the number $S_y(b)$ of subsets of $\{1,\dots,y\}$ whose product is congruent to $b \mod p$ equals
\begin{align} \label{eq:thm3-step1}
S_y(b) &= \sum_{j_1\in\{0,1\}} \sum_{j_2\in\{0,1\}} \cdots \sum_{j_y\in\{0,1\}} \frac{1}{p-1}  \sum_{\chi \mod p} \chi\big(1^{j_1} 2^{j_2} \cdots y^{j_y} b^{-1}\big) \notag \\
&= \frac{1}{p-1} \sum_{\chi \mod p} \chi(b^{-1}) \prod_{n=1}^{y}\left(1 + \chi(n)\right) = \frac{2^y}{p-1} + O\bigg( \frac1p \sum_{\substack{\chi \mod p\\\chi \neq \chi_0}} \prod_{n=1}^{y} |1 + \chi(n) | \bigg).
\end{align}
The only way the asymptotic formula~\eqref{main asymptotic} could fail is if some summand of the error term in equation~\eqref{eq:thm3-step1} is exceedingly close to its trivial bound of~$2^y$. Such an event should indicate that most of the values $\chi(n)$ are close to~$1$, which in turn should force a violation on Burgess's character sum bound~\cite{burgess1} for nonprincipal characters~$\chi\mod p$ over an interval:
\begin{equation} \label{Burgess}
\text{If } t > p^{1/4+\ep} \text{, then } \sum_{n\le t} \chi(n) = o(t).
\end{equation}

Compared to Burgess's proof for~$n_2(p)$, there are two modest complications we must overcome. First, we must make rigorous the idea that a large summand in the error term of equation~\eqref{eq:thm3-step1} forces most of the corresponding values~$\chi(n)$ for $1\le n\le y$ to be close to~$1$. Second, we must show that this situation necessitates a large value not just for the character sum $\sum_{n\le y} \chi(n)$ but for the longer sum $\sum_{n\le t} \chi(n)$ with $t=y^{\sqrt e-\ep}$, even though we cannot control every single value~$\chi(n)$ in the former sum nor can we assume that the values close to~$1$ are actually equal to~$1$; for example, $\chi(2)$ might be close to~$1$ without the same being true for~$\chi(2^j)$.

\section{Convenient factorizations} \label{combinatorial section}

The products of subsets of $\{1,\dots,y\}$ we are considering will all of course be $y$-friable numbers. One of the benefits of friability is the existence of certain types of factorizations; the following two propositions give concrete instances of this philosophy. We have stated them in greater generality than we need because the more general proof is just as clear and easy to write, and in the hopes that others might find them useful.

\begin{proposition}\label{prop:k-way}
Let~$k$ be a positive integer and $y>1$ a real number. If~$n$ is a $y$-friable number not exceeding $y^{(k+1)/2}$, then~$n$ can be factored as $n = b_1 b_2 \cdots b_k$ so that each $b_j \le y$.
\end{proposition}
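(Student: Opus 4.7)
The approach will be induction on $k$. The base case $k=1$ is immediate: the hypothesis reads $n \le y^1$, so we may take $b_1 = n$.

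For the inductive step, I will assume the result for $k-1$ and argue for $k \ge 2$. Given a $y$-friable $n$ with $n \le y^{(k+1)/2}$, I will set $b_k$ to be the largest divisor of $n$ satisfying $b_k \le y$; such a divisor exists because $1 \mid n$ and $1 \le y$. The heart of the argument is the claim that $n/b_k \le y^{k/2}$. Granting this, $n/b_k$ is a $y$-friable positive integer (being a divisor of $n$) that meets the hypothesis of the proposition at level $k-1$, so the inductive hypothesis yields $n/b_k = b_1 b_2 \cdots b_{k-1}$ with each $b_j \le y$; appending $b_k$ then completes the desired factorization of $n$.

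It remains to establish the claim, which I will do by contradiction. Suppose instead that $n/b_k > y^{k/2}$. Combined with $n \le y^{(k+1)/2}$, this forces $b_k < \sqrt{y}$. Since $n/b_k > y^{k/2} \ge y > 1$, the integer $n/b_k$ has at least one prime factor $p$; the $y$-friability of $n$ gives $p \le y$. Thus $p$ itself is a divisor of $n$ not exceeding $y$, and the maximality of $b_k$ yields $p \le b_k < \sqrt{y}$. On the other hand, $b_k p$ is a divisor of $n$ strictly larger than $b_k$, so the maximality of $b_k$ also forces $b_k p > y$, hence $p > y/b_k > \sqrt{y}$. The inequalities $p < \sqrt{y}$ and $p > \sqrt{y}$ contradict each other, finishing the proof.

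I do not anticipate any serious obstacle: the driving mechanism is simply that $y$-friability turns every prime factor of $n$ into one of the candidate divisors, so the greedy/maximal choice of $b_k$ controls those primes from above (by $b_k$) while simultaneously controlling them from below (by $y/b_k$), forcing $b_k \ge \sqrt{y}$ automatically. The only mild subtlety is recognizing that maximality is the precise property to exploit at each inductive step.
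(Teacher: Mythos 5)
Your proof is correct. Let me note the differences from the paper's argument, then point out that your argument and the paper's exploit the same obstruction.

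The paper proceeds directly (no induction): it lists the primes of $n$ in decreasing order and greedily assigns each prime in turn to the first slot $b_s$ that still has room, then shows by contradiction that this assignment never gets stuck. You instead induct on $k$, peeling off $b_k$ as the \emph{largest} divisor of $n$ not exceeding $y$, and showing the leftover $n/b_k$ satisfies the $k-1$ hypothesis. Your contradiction argument is clean: assuming $n/b_k > y^{k/2}$ forces $b_k < \sqrt y$; friability then produces a prime $p \le y$ dividing $n/b_k$, which by maximality of $b_k$ satisfies both $p \le b_k < \sqrt y$ and $b_k p > y$, i.e.\ $p > \sqrt y$, a contradiction. One small point worth making explicit: you use $y^{k/2} \ge y > 1$ to guarantee $n/b_k > 1$ has a prime factor, which requires $k \ge 2$ — true in the inductive step, but worth flagging. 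Both proofs hinge on the same mechanism (getting stuck would require a prime larger than $\sqrt y$, and too many such primes would overflow the bound $y^{(k+1)/2}$); your inductive formulation arguably isolates this more cleanly, at the cost of not producing an explicit greedy algorithm as the paper's proof does.
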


\begin{remark}
The exponent $\frac{k+1}2$ is best possible: if $n$ is the product of $k+1$ primes each slightly larger than $y^{1/2}$, then no such $k$-way factorization is possible.
\end{remark}

\begin{proof}
Let $n = p_1p_2\cdots p_r$ be the factorization of~$n$ into primes $p_1 \ge p_2 \ge \cdots \ge p_r$. Our strategy is to assign these primes one at a time greedily to the factors $b_1,\dots,b_k$.

More precisely, let $b_i(0)=1$ for each $1\le i\le k$; then, recursively for each $0\le j\le r-1$, let $s(j)$ be the smallest integer in $\{1,\dots,k\}$ such that $b_{s(j)}(j) \le y/p_{j+1}$, and then set $b_{s(j)}(j+1) = p_{j+1} b_{s(j)}(j)$ and $b_i(j+1) = b_i(j)$ for all $i\in \{1,\dots,k\} \setminus \{s(j)\}$. By construction, we have $b_i(j) \le y$ for all $1\le i\le k$ and all $0\le j\le r$, and also $b_1(j)\cdots b_k(j) = p_1\cdots p_j$ for all $0\le j\le r$. In particular, $b_1(r)\cdots b_k(r) = p_1\cdots p_r = n$ is the factorization we seek.

All that needs to be checked is that there always indeed exists an integer $1\le s(j) \le k$ such that $b_{s(j)}(j) \le m/p_{j+1}$. Suppose for the sake of contradiction that there exists~$j$ such that $b_i(j) > y/p_{j+1}$ for all $1\le i\le k$. In particular, each $b_i(j) > 1$ since $p_{j+1}\le y$ by hypothesis, and therefore $b_i(j) \ge p_j \ge p_{j+1}$ by construction. Moreover, we have both
\[
b_1(j)\cdots b_k(j) p_{j+1} > \bigg( \frac m{p_{j+1}} \bigg)^k p_{j+1} = \frac{y^k}{p_{j+1}^{k-1}}
\]
and
\[
b_1(j)\cdots b_k(j) p_{j+1} = p_1\cdots p_j p_{j+1} \le n \le y^{(k+1)/2},
\]
which together imply that $y^k/p_{j+1}^{k-1} < y^{(k+1)/2}$ or simply $\sqrt y<p_{j+1}$. On the other hand, the inequalities $b_i(j) \ge p_{j+1}$ give
\[
n\ge b_1(j)\cdots b_k(j) p_{j+1} \ge p_{j+1}^{k+1} > y^{(k+1)/2},
\]
which is a contradiction.
\end{proof}

\begin{proposition} \label{prop:k-way2}
Let~$k$ be a positive integer, and let~$\ep$ be a real number satisfying $0<\ep<\frac1{k+2}$. For any real number~$y>1$, if~$n$ is a $y$-friable number such that $y^{k/2+\ep}<n<y^{(k+1)/2}$, then~$n$ can be factored as $n = c_1 c_2 \cdots c_{\ell}$, for some integer $\frac k2< \ell\le k$, so that each $y^{\ep} < c_j \le y$.
\end{proposition}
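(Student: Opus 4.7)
My plan is to derive Proposition~\ref{prop:k-way2} from Proposition~\ref{prop:k-way} by amalgamating the small factors. First, I would apply Proposition~\ref{prop:k-way} to obtain a factorization $n = b_1 \cdots b_k$ with each $b_i \le y$, and then relabel so that $b_1 \ge \cdots \ge b_k$. Let $m$ denote the largest index with $b_m > y^\ep$; the remaining ``small'' factors combine into $B := b_{m+1} \cdots b_k$, which satisfies $B \le y^{\ep(k-m)} \le y^{\ep k} < y$ thanks to the hypothesis $\ep < 1/(k+2)$. Note also that $m > k/2$ is automatic, since any factorization of $n$ into pieces bounded by $y$ must have more than $\log_y n > k/2 + \ep$ pieces.

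In the easy sub-case $B > y^\ep$, the tuple $(b_1, \ldots, b_m, B)$ provides $\ell = m+1$ factors, each in $(y^\ep, y]$, with $k/2 < \ell \le k$ as required. If instead $B \le y^\ep$ but the smallest good factor satisfies $b_m \le y^{1-\ep}$, then $b_m B \le y^{1-\ep} \cdot y^\ep = y$, so absorbing $B$ into $b_m$ produces the desired factorization with $\ell = m$ factors each in $(y^\ep, y]$.

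The main obstacle is the residual case where $B \le y^\ep$ \emph{and} $b_m > y^{1-\ep}$, so every good factor hugs the top of the allowed range. Here the hypothesis $\ep < 1/(k+2)$ is decisive: plugging $n > y^{m(1-\ep)}$ into $n < y^{(k+1)/2}$ yields $m < (k+1)/(2(1-\ep)) < (k+2)/2$, and combined with $m > k/2$ this traps $m$ in a half-open interval containing no integer when $k$ is even, so the case is vacuous in that parity. For odd $k$ the interval pins $m = (k+1)/2$, and I would resolve this by returning to the prime factorization of~$n$: either some $b_i$ admits a prime factor $q \le \sqrt{b_i} \le \sqrt y$, which is less than $y^{1-2\ep}$ under our assumption, permitting the transfer $b_i \mapsto b_i/q$ and $B \mapsto Bq$ that simultaneously drops $b_i/q$ into $(y^\ep, y]$ and lifts $Bq$ above $y^\ep$; or else each of $b_1, \ldots, b_m$ is itself a prime in $(y^{1-\ep}, y]$, in which case the bound $\prod_{i=1}^m b_i \cdot B = n < y^{(k+1)/2} = y^m$ forces $B < 1$, i.e.\ $B = 1$, so the factorization with $\ell = m$ and $c_i = b_i$ already works. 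Keeping track of the inequalities through this last bifurcation, and verifying that the transfer indeed lands both factors in $(y^\ep, y]$, is the most delicate step.
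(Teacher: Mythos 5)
You take a genuinely different route from the paper: the paper applies Proposition~\ref{prop:k-way} with parameters $k+1$ and $y^{1-\ep}$, producing $k+1$ factors each at most $y^{1-\ep}$, collapses the two smallest into one (still $\le y$), and then pairs each remaining factor $\le y^\ep$ with a distinct large one, so every combined piece is $\le y^\ep\cdot y^{1-\ep}=y$ automatically. You instead apply Proposition~\ref{prop:k-way} with $(k,y)$, lump the small factors into a single product $B$, and try to absorb or redistribute $B$. Losing the $y^{1-\ep}$ cushion is what creates the trouble below.

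The residual case has genuine gaps. Transferring a prime $q\le\sqrt{b_i}$ from some composite $b_i$ to $B$ does not ``lift $Bq$ above $y^\ep$'': $q$ can be as small as $2$ and $B$ as small as $1$, so $Bq$ need not exceed $y^\ep$. The backup claim that all $b_i$ being prime ``forces $B<1$'' is also wrong: $\prod b_i\cdot B<y^m$ together with $b_i\le y$ gives no lower bound on $\prod b_i$, so nothing forces $B<1$. Concretely, take $k=3$, $\ep$ just below $\tfrac15$, $y$ large, and $n=2p^2$ with $p$ a prime near $y/\sqrt3$; the greedy in Proposition~\ref{prop:k-way} yields $(b_1,b_2,b_3)=(p,p,2)$, so $m=2$, $B=2$, both $b_i=p>y^{1-\ep}$ are prime, and every branch of your residual case fails. (In fact this $n$ has no factorization of the required shape at all; the paper's own proof silently requires that $n$ be $y^{1-\ep}$-friable when it invokes Proposition~\ref{prop:k-way} with bound $y^{1-\ep}$, a hypothesis the statement technically omits, which is why your attempt built only on $y$-friability cannot close.) A smaller issue: the claimed ``automatic'' bound $m>k/2$ does not follow from the count you cite, which controls the total number of factors $\le y$, not the number exceeding $y^\ep$; when $B>y^\ep$ and $k$ is odd one can have $m=(k-1)/2<k/2$. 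Your conclusion $\ell>k/2$ still happens to hold in each sub-case, but by other reasoning ($\ell=m+1$ when $B>y^\ep$; and $n\le y^{m+\ep}$ forcing $m>k/2$ when $B\le y^\ep$), which you should supply in place of the flawed count.
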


\begin{remark}
The exponent $\frac k2+\ep$ in the lower bound is also best possible: if~$q$ is a fixed prime satisfying $2^{k/2}<q<y^\ep$, and~$n$ is~$q$ times the product of~$\frac k2$ primes each in the interval $(\frac y2,y)$, then no such $\ell$-way factorization is possible.
\end{remark}

\begin{proof}
Since $\ep < \frac1{k+2}$, we have $n < y^{(k+1)/2} < \left(y^{1-\ep}\right)^{(k+2)/2}$, whereby Proposition~\ref{prop:k-way} (with~$k$ and~$y$ replaced by $k+1$ and $y^{1-\ep}$, respectively) permits us to write~$n$ as
\begin{align*}
n = b_1b_2 \cdots b_{k+1} \quad \text{with} \quad b_1 \le \cdots \le b_{k+1} \le y^{1-\ep}.
\end{align*}
Note that $b_1b_2 \le y$: if not, then by the ordering of the $b_j$ we have $\sqrt y < b_2 \le \cdots \le b_{k+1}$ and therefore $n=(b_1b_2)b_3\cdots b_{k+1} > y(\sqrt y)^{k-1} = y^{(k+1)/2}$, contrary to hypothesis. Therefore if we set $c_k = b_1b_2$ and $c_i = b_{i+1}$ for $1 \le i \le k-1$, we have
\begin{align} \label{k-way-eq2}
n = c_1c_2 \cdots c_{k} \quad \text{with} \quad c_k \le y \quad \text{and} \quad c_1 \le \cdots \le c_{k-1} \le y^{1-\ep}.
\end{align}
If $c_1>y^\ep$, then this factorization satisfies all the desired conditions; otherwise, we can pair certain of these divisors together in the following way. Let~$g$ be the positive integer such that $c_g \le y^\ep < c_{g+1}$. We cannot have $g\ge\frac k2$, or otherwise
\begin{align*}
n = c_1\cdots c_g c_{g+1}\cdots c_{k-1} c_k &\le ( y^{\ep})^g \big(y^{1-\ep} \big)^{k-1-g} y \\
&= y^{k(1 - \ep) + \ep - (1 - 2\ep)g} \le  y^{k(1 - \ep) + \ep - (1 - 2\ep)k/2} = y^{k/2 + \ep},
\end{align*}
contrary to hypothesis. Since $g<\frac k2$, we may define $c'_i = c_ic_{g+i}$ for each $1\le i\le g$, so that by the assumptions in equation \eqref{k-way-eq2},
\begin{align} \label{k-way-eq3}
y^{\ep} < c_{g+i} \le c'_i \le y^{\ep} \cdot y^{1-\ep} = y \quad \text{for each } 1\le i\le g.
\end{align}
Consequently $n = c'_1 \cdots c'_{g} c_{2g+1}\cdots c_k$ is the product of $\ell = k-g$ numbers all of which are in the desired range by equations~\eqref{k-way-eq2} and~\eqref{k-way-eq3}.
\end{proof}

As mentioned before, we do not need the full generality of these results for our present purposes, but only the $k=3$ case of Proposition~\ref{prop:k-way2}:

\begin{corollary} \label{cor:3-way}
Let $0<\ep<\frac15$. For any real number~$y>1$, if~$n$ is a $y$-friable number such that $y^{3/2+\ep}<n<y^2$, then~$n$ can be factored as $n = c_1 c_2 c_3$ such that, for $1\le i\le 3$, either $c_i=1$ or $y^\ep < c_i \le y$.
\end{corollary}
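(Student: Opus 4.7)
The plan is to deduce this corollary directly from Proposition~\ref{prop:k-way2} by specializing to $k=3$. First I would verify that the hypotheses align perfectly: the constraint $0<\ep<\frac15$ matches the requirement $0<\ep<\frac{1}{k+2}=\frac15$ in the proposition, and the interval $y^{3/2+\ep}<n<y^2$ for $n$ is exactly the interval $y^{k/2+\ep}<n<y^{(k+1)/2}$ when $k=3$.

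Applying Proposition~\ref{prop:k-way2} then supplies a factorization $n=c_1c_2\cdots c_\ell$ with $\frac{3}{2}<\ell\le 3$ and $y^\ep<c_j\le y$ for each~$j$. Since $\ell$ is an integer, the only possibilities are $\ell=2$ and $\ell=3$. In the case $\ell=3$, the factorization is already of the asserted form. In the case $\ell=2$, I would simply set $c_3=1$ and relabel, so that $n=c_1c_2c_3$ with $c_3=1$ and the other two factors in the interval $(y^\ep, y]$, which still satisfies the alternative ``either $c_i=1$ or $y^\ep<c_i\le y$'' stated in the corollary.

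There is no genuine obstacle here beyond bookkeeping: the corollary is essentially a convenient repackaging of the $k=3$ case, the only minor point being that the proposition produces a variable number of factors $\ell\in\{2,3\}$ while the corollary phrases things uniformly as a three-way product, which is precisely why the clause permitting $c_i=1$ has been inserted into the statement.
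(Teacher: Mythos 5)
Your proposal is correct and is precisely the deduction the paper has in mind: Corollary~\ref{cor:3-way} is the $k=3$ instance of Proposition~\ref{prop:k-way2}, with the clause ``$c_i=1$'' inserted exactly to absorb the case $\ell=2$ by padding the factorization with a trivial factor.
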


\section{Proof of the main theorem} \label{proof section}

To implement the strategy described in Section~\ref{strategy section}, we establish a precise statement (Lemma~\ref{lem:logp-loglogp} below) of the idea that a large product $\prod_{n=1}^y |1 + \chi(n)|$ implies that most values~$\chi(n)$ are close to~$1$.

\begin{lemma} \label{circle lemma}
Let $0<\delta<2$, let~$\chi$ be a Dirichlet character, and let $c_1,\dots,c_k$ be integers that are relatively prime to the conductor of~$\chi$. If $|\chi(c_j)-1| \le \delta$ for all $1\le j\le k$, then $\Re \chi(c_1\cdots c_k) = 1 + O(k^2\delta^2)$.
\end{lemma}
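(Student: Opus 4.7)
The plan is to convert the multiplicative hypothesis into an additive one via the polar form of each $\chi(c_j)$. Since each $c_j$ is coprime to the conductor of $\chi$, the value $\chi(c_j)$ lies on the unit circle, so I would write $\chi(c_j) = e^{i\theta_j}$ with $|\theta_j| \le \pi$.

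The hypothesis $|\chi(c_j) - 1| \le \delta$ then translates, via the identity $|e^{i\theta} - 1|^2 = 2 - 2\cos\theta = 4\sin^2(\theta/2)$, into the inequality $|\sin(\theta_j/2)| \le \delta/2$. Using the elementary bound $|\sin x| \ge (2/\pi)|x|$ on $[-\pi/2,\pi/2]$ (which applies here since $\delta<2$ forces $|\theta_j/2| < \pi/2$), I would deduce $|\theta_j| \le \pi\delta/2$, i.e.\ $|\theta_j| = O(\delta)$ with an absolute implied constant.

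Next, complete multiplicativity of $\chi$ on integers coprime to the conductor gives $\chi(c_1\cdots c_k) = e^{i\Theta}$ with $\Theta = \theta_1 + \cdots + \theta_k$, so the triangle inequality yields $|\Theta| \le \sum_{j=1}^k |\theta_j| = O(k\delta)$. Taking real parts, $\Re\chi(c_1\cdots c_k) = \cos\Theta$, and the two-sided bound $1 - \Theta^2/2 \le \cos\Theta \le 1$ (valid for all real $\Theta$) immediately gives $\Re\chi(c_1\cdots c_k) = 1 + O(\Theta^2) = 1 + O(k^2\delta^2)$, as required.

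There is essentially no serious obstacle here; the argument is just polar form, the triangle inequality, and the quadratic Taylor bound for cosine near zero. The only mild subtlety worth flagging is that the conclusion is vacuous when $k^2\delta^2$ is not small: if $k\delta$ is large then the estimate follows trivially from $|\Re\chi(\cdot)| \le 1$, so a single implied constant suffices to cover both the small-angle regime (where the Taylor bound is doing the work) and the large-angle regime (where the conclusion is automatic).
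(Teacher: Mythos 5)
Your proof is correct and follows essentially the same path as the paper's: write each value in polar form, bound each $|\theta_j| \ll \delta$ geometrically, sum the angles via multiplicativity, and apply the quadratic Taylor bound for cosine. The paper phrases the geometric step as $|\theta_j| \le 2\arcsin(\delta/2)$ while you unpack it via the $|\sin x| \ge (2/\pi)|x|$ inequality, but the arguments coincide.
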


\begin{proof}
For each $1\le j\le k$, write $\chi(c_j) = e^{i\theta_j}$ with $|\theta_j| \le \pi$.
By elementary geometry, we have $|\chi(c_j)-1| \le \delta$ if and only if $|\theta_j| \le 2\arcsin\frac\delta2$; in particular, the hypotheses imply that each $\theta_j \ll \delta$. Then $\chi(c_1\dots c_k) = e^{i\theta}$ where $\theta = (\theta_1+\cdots+\theta_k) \ll k\delta$; in particular, $\Re\chi(c_1\dots c_k) = \cos\theta = 1+O(\theta^2) = 1+O(k^2\delta^2)$ as claimed.
\end{proof}

\begin{lemma} \label{z lemma}
Let $0<\delta<2$, and let~$z\in\C$ satisfy either $z=0$ or $|z|=1$. If $|z-1|\ge\delta$, then $|1+z| \le 2 e^{-\delta^2/8}$.
\end{lemma}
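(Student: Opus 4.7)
The plan is a short geometric calculation, split into the two cases permitted by the hypothesis on~$z$.

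If $z=0$, the hypothesis $|z-1|\ge\delta$ forces $\delta\le 1$, and then $\delta^2/8\le 1/8 < \ln 2$, so $2e^{-\delta^2/8} > 1 = |1+z|$, as required.

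The substantive case is $|z|=1$. Here I would invoke the parallelogram identity
\[
|1+z|^2 + |1-z|^2 = 2(1+|z|^2) = 4,
\]
which together with $|1-z|\ge\delta$ yields $|1+z|^2 \le 4-\delta^2$. Dividing by~$4$ and applying the standard inequality $1-t\le e^{-t}$ (valid for $t=\delta^2/4 \in [0,1)$ since $\delta<2$) gives
\[
\bigl(\tfrac{|1+z|}{2}\bigr)^2 \le 1 - \tfrac{\delta^2}{4} \le e^{-\delta^2/4},
\]
and taking square roots produces the desired bound $|1+z|\le 2e^{-\delta^2/8}$.

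There is no real obstacle here; the only thing to notice is that the parallelogram identity converts a lower bound on $|1-z|$ directly into an upper bound on $|1+z|^2$, after which the passage from the polynomial bound $1-t$ to the exponential bound $e^{-t}$ is just what puts the estimate into the convenient form needed later (presumably to control $\prod_n|1+\chi(n)|$ via a sum $\sum_n|\chi(n)-1|^2$).
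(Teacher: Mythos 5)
Your proof is correct and follows essentially the same route as the paper: the paper also handles $z=0$ by noting $2e^{-1/8}>1$, and for $|z|=1$ it uses the identity $|1+z|^2 = 4 - |z-1|^2$ (which is exactly your parallelogram law specialized to $|z|=1$) followed by the elementary chain $\sqrt{4-t}\le 2(1-\tfrac t8)\le 2e^{-t/8}$. Your variant, applying $1-t\le e^{-t}$ before taking square roots, skips the intermediate rational bound but is otherwise the same argument.
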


\begin{proof}
The case $z=0$ is trivial since $2e^{-1/8}>1$. When $|z|=1$, it is easy to verify that $|1+z|^2 = 4 - |z-1|^2$, either by using $|w|^2 = w\overline w$ and expanding or via the Pythagorean theorem. The lemma then follows from the inequalities $\sqrt{4-t} \le 2(1-\frac t8) \le 2e^{-t/8}$, both of which can be proved with the functions' power series expansions at $t=0$.
\end{proof}

\begin{lemma}\label{lem:logp-loglogp}
Let~$\chi$ be a Dirichlet character and~$y$ a positive integer. If $\prod_{n=1}^y |1 + \chi(n)|> 2^y/p^2$, then the number of integers $n \in \{1, \ldots, y\}$ such that $|\chi(n)-1| > 1/\log p$ is less than $16(\log p)^3$.
\end{lemma}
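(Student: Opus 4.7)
The plan is to apply Lemma~\ref{z lemma} with $\delta = 1/\log p$ to each of the "bad" factors in the product and derive an upper bound on the number of such factors by comparing with the hypothesis.

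First, I would split the product $\prod_{n=1}^y |1+\chi(n)|$ according to whether or not $|\chi(n)-1| > 1/\log p$. Let $k$ denote the number of indices $n \in \{1,\ldots,y\}$ for which $|\chi(n)-1| > 1/\log p$. For each such index, Lemma~\ref{z lemma} (applied with $\delta = 1/\log p$, and noting that $\chi(n)$ is either $0$ or on the unit circle) yields
\[
|1+\chi(n)| \le 2 e^{-1/(8\log^2 p)}.
\]
For the remaining $y-k$ indices I would simply use the trivial bound $|1+\chi(n)| \le 2$. Multiplying everything together gives
\[
\prod_{n=1}^y |1 + \chi(n)| \le 2^y \exp\!\bigl(-k/(8\log^2 p)\bigr).
\]

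Combining this with the hypothesis $\prod_{n=1}^y |1+\chi(n)| > 2^y/p^2$, I cancel $2^y$ and take logarithms to obtain
\[
-\frac{k}{8\log^2 p} > -2\log p,
\]
which rearranges to $k < 16(\log p)^3$, exactly the conclusion sought.

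There is no real obstacle here: once Lemma~\ref{z lemma} is available, the argument is a one-line pigeonhole-style comparison. The only minor thing to verify is that the case $\chi(n) = 0$ is genuinely handled: for $p \ge 3$ we have $|\chi(n)-1| = 1 > 1/\log p$, so those $n$ are counted among the $k$ bad indices, and the bound $|1+\chi(n)| = 1 \le 2 e^{-1/(8\log^2 p)}$ is trivially satisfied. So the proof essentially reduces to plugging $\delta = 1/\log p$ into Lemma~\ref{z lemma} and doing the arithmetic.
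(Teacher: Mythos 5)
Your proof is correct and is essentially identical to the paper's: both split the product according to whether $|\chi(n)-1| > 1/\log p$, apply Lemma~\ref{z lemma} with $\delta = 1/\log p$ to the bad indices and the trivial bound to the rest, and compare with the hypothesis (the paper phrases the last step as a contradiction, you rearrange directly, but it is the same computation). Your explicit check of the $\chi(n)=0$ case is a welcome touch that the paper leaves implicit.
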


\begin{proof}
Let~$\ell$ be the number of integers $n \in \{1, 2, \ldots, y\}$ such that $|\chi(n)-1| > 1/\log p$; for each such~$n$, Lemma~\ref{z lemma} tells us that $|1+\chi(n)| \le 2 e^{-1/8(\log p)^2}$. Using the trivial bound $|1+\chi(n)|\le 2$ for the other $2^{y-\ell}$ integers, we obtain
\[
\prod_{n=1}^y |1 + \chi(n)| \le 2^{y-\ell} \big( 2 e^{-1/8(\log p)^2} \big)^\ell = 2^ye^{-\ell/8(\log p)^2}.
\]
This upper bound establishes the lemma, since if $\ell \ge 16 (\log p)^3$, then we would have
\[
\prod_{n=1}^y |1 + \chi(n)| \le 2^y e^{-2\log p} = \frac{2^y}{p^2},
\]
contrary to assumption.
\end{proof}

Our last proposition contains a few technical details. The classical Burgess proof deals with $y$-friable integers less than~$t$, and our proposition retains this feature. In addition, we must consider whether these integers have divisors on which the character~$\chi$ takes values that are not near~$1$; for technical convenience, we consider only such divisors that are not too small (greater than~$z$), and we reject small integers (less than~$x$) altogether. We use $P(n)$ to denote the largest prime factor of~$n$, so that~$n$ is tautologically $P(n)$-friable.

\begin{proposition} \label{split prop}
Let~$y$ and~$t$ be positive integers such that $y \le t \le y^2$, let~$p$ be prime, and let~$z$ and~$x$ be real numbers satisfying $1<z\le x\le t$. For any Dirichlet character $\chi\mod p$, define
\begin{equation} \label{Achi def}
A_\chi = \bigg\{ x<n\le t\colon P(n)\le y,\, ( c\mid n \text{ and } c>z) \implies |\chi(c)-1| \le \frac1{\log p} \bigg\}
\end{equation}
and set $A_\chi^c = \{1,\dots,y\} \setminus A_\chi$. If $\prod_{n=1}^y |1 + \chi(n)|> 2^y/p^2$, then
\[
\Re \sum_{n=1}^y \chi(n) \ge \Re \sum_{n\in A_\chi} \chi(n) - t\log\frac{\log t}{\log y} + O\bigg( x + \frac tz(\log p)^3 + \frac t{\log t} \bigg).
\]
\end{proposition}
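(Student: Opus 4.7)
The plan is to relate the short character sum $\Re \sum_{n=1}^{y} \chi(n)$ to the longer sum $\Re \sum_{n \in A_\chi} \chi(n)$ via a Burgess-style factorization argument. The key structural property is that, by definition of $A_\chi$, every divisor $c>z$ of an $n \in A_\chi$ satisfies $|\chi(c)-1| \leq 1/\log p$, so $\chi$ takes near-unit values on such divisors. The combinatorial tools of Section~\ref{combinatorial section}, especially Corollary~\ref{cor:3-way}, supply exactly the factorizations needed to exploit this.

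For each $n \in A_\chi$, since $n$ is $y$-friable and $n \leq t \leq y^2$, I would apply Proposition~\ref{prop:k-way} (or Corollary~\ref{cor:3-way} with $\epsilon$ chosen so that $y^{\epsilon}>z$) to write $n = a_1\cdots a_k$ with $k \leq 3$ and each non-trivial factor $a_i \in (z,y]$. The $A_\chi$-condition then gives $|\chi(a_i)-1|\leq 1/\log p$ for each such factor, and Lemma~\ref{circle lemma} yields $\Re \chi(n) = 1 + O((\log p)^{-2})$; in particular the values $\chi(n)$ for $n \in A_\chi$ cluster near $1$.

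Next I would set up a double-counting over pairs $(a,m)$ with $a\mid n$, $a\leq y$, and $n=am \in A_\chi$, using multiplicativity $\chi(n)=\chi(a)\chi(m)$. Summing over all such pairs yields
\[
\sum_{a \leq y} \chi(a) \sum_{\substack{m \leq t/a \\ am \in A_\chi}}\chi(m) = \sum_{n \in A_\chi} \tau_y(n)\chi(n),
\]
where $\tau_y(n)=|\{d\mid n : d \leq y\}|$. For a typical $y$-friable $n \leq t$ the ``depth'' of the factorization tree is governed by $\log t/\log y$, and averaging $\tau_y(n)$ produces the combinatorial error $t\log(\log t/\log y)$. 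Rearranging this double sum, and using $\chi(a)\approx 1$ on the $A_\chi$-compatible divisors to approximate the inner sum, should then yield the desired comparison between $\Re \sum_{n\leq y}\chi(n)$ and $\Re \sum_{n \in A_\chi}\chi(n)$.

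The three explicit error terms arise from the boundary cases: integers $n \leq x$ contribute $O(x)$ trivially; primes $n \in (x,t]$ (which cannot be factored as $n=am$ with $a,m>1$ and $a \leq y$) contribute $O(t/\log t)$ via the prime number theorem; and divisors $c \leq z$ (where the $A_\chi$-condition gives no control on $\chi(c)$) contribute $O((t/z)(\log p)^3)$, by combining Lemma~\ref{lem:logp-loglogp}, which bounds the number of ``bad'' small integers by $16(\log p)^3$, with a trivial count of their multiples up to $t$. The principal obstacle I foresee is establishing the combinatorial identity governing $\tau_y(n)$ averaged over $A_\chi$ with exactly the right constant: a careful analysis of the $y$-friable factorization structure is likely required so that the error emerges in the stated form $t\log(\log t/\log y)$ without losing additional $\log$ factors.
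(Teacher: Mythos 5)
The paper's proof of this proposition is a short counting argument and does not require the Burgess-style convolution you set up. (Note also that the stated $A_\chi^c = \{1,\dots,y\}\setminus A_\chi$ and $\sum_{n=1}^y$ appear to be typos for $\{1,\dots,t\}\setminus A_\chi$ and $\sum_{n=1}^t$; with $\sum_{n\le y}$ on the left the inequality fails in the regime $x>y$ used later, since the right-hand side can exceed $y$.) The argument is simply this: an integer $n\le t$ lies in $A_\chi^c$ only if (a) $n\le x$, of which there are trivially $O(x)$; or (b) $P(n)>y$, of which there are $t\log\frac{\log t}{\log y}+O(t/\log t)$ by the de~Bruijn estimate for $y$-friable integers (valid because $1\le \log t/\log y\le 2$); or (c) $n$ has a divisor $c>z$ with $|\chi(c)-1|>1/\log p$, of which there are $O\big((t/z)(\log p)^3\big)$ because Lemma~\ref{lem:logp-loglogp} caps the number of such $c$ at $16(\log p)^3$ and each has at most $t/z$ multiples up to $t$. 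Summing gives~\eqref{Achic size}, and then $\Re\sum_{n\le t}\chi(n)=\Re\sum_{n\in A_\chi}\chi(n)+\Re\sum_{n\in A_\chi^c}\chi(n)\ge\Re\sum_{n\in A_\chi}\chi(n)-\#A_\chi^c$ trivially.

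Your route is not only more elaborate but has genuine gaps. The identity $\sum_{a\le y}\chi(a)\sum_{m\le t/a,\,am\in A_\chi}\chi(m)=\sum_{n\in A_\chi}\tau_y(n)\chi(n)$ introduces the weight $\tau_y(n)$, which then has to be removed to reach the unweighted sums on both sides of the target inequality, and you do not say how. Your attributions of the error terms are also off: $t\log\frac{\log t}{\log y}$ is not an average of $\tau_y$ (that would produce something of order $\log y$) but is precisely the count of non-$y$-friable $n\le t$; and $O(t/\log t)$ is the error term in the friable-count asymptotic, not a prime count in $(x,t]$. Finally, the observation that $\Re\chi(n)=1+O((\log p)^{-2})$ for $n\in A_\chi$, which you derive via Corollary~\ref{cor:3-way} and Lemma~\ref{circle lemma}, is a genuine ingredient of the paper --- but it belongs to the proof of Theorem~\ref{Main-Theorem}, where it bounds $\Re\sum_{n\in A_\chi}\chi(n)$ from below, not to the proof of this proposition, which makes no use of it.
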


\begin{proof}
Note that
\[
\#\bigg\{ n\le t\colon \text{there exists } c\mid n \text{ with } c>z \text{ and } |\chi(c)-1| > \frac1{\log p} \bigg\} \le \sum_{\substack{c>z \\ |\chi(c)-1| > 1/\log p}} \frac tc \ll \frac tz (\log p)^3,
\]
since the number of summands in the middle expression is at most $16(\log p)^3$ by Lemma~\ref{lem:logp-loglogp}. Thus
\[
\#A_\chi^c = \#\{ n\le t\colon P(n)>y \} + O\bigg( x + \frac tz (\log p)^3 \bigg).
\]
But since $y\le t\le y^2$, it is well known~\cite[equation~(7.2)]{montgomery-vaughan} that the number of $y$-friable integers up to~$t$ is $t\big(1 - \log \frac{\log t}{\log y}\big) + O\big( \frac{t}{\log t} \big)$, and therefore
\begin{equation} \label{Achic size}
\#A_\chi^c = t \log \frac{\log t}{\log y} + O\bigg( x + \frac tz (\log p)^3 + \frac{t}{\log t} \bigg).
\end{equation}
The proposition follows immediately, since
\[
\Re \sum_{n=1}^y \chi(n) = \Re \sum_{n\in A_\chi} \chi(n) + \Re \sum_{n\in A_\chi^c} \chi(n) \ge \Re \sum_{n\in A_\chi} \chi(n) - \#A_\chi^c. \qedhere
\]
\end{proof}

We now have all the tools required for proving our main theorem.

\begin{proof}[Proof of Theorem \ref{Main-Theorem}]
Recall that $p^{1/(4\sqrt e)+\ep}<y<p$.
We claim that $\prod_{n=1}^{y} |1 + \chi(n)| \le 2^y/p^2$ for every nonprincipal character~$\chi$, which if true would immediately imply the theorem thanks to equation~\eqref{eq:thm3-step1}.

To justify this claim, suppose for the sake of contradiction that~$\chi$ is a nonprincipal Dirichlet character with the property that
\begin{align}
\prod_{n=1}^{y} |1 + \chi(n)| > \frac{2^y}{p^2}. \label{eq:assumption-for-improved-theorem}
\end{align}
With the parameter~$t$ left unspecified for the moment, consider the set~$A_\chi$ defined in equation~\eqref{Achi def}, with $x=y^{3/2+\ep}$ and $z=y^\ep$:
\[
A_\chi = \bigg\{ y^{3/2+\ep}<n\le t\colon P(n)\le y,\, ( c\mid n \text{ and } c>y^\ep) \implies |\chi(c)-1| \le \frac1{\log p} \bigg\}.
\]
If $t<y^2$, then by Corollary~\ref{cor:3-way} every $n\in A_\chi$ can be written as $n=c_1c_2c_3$ where, for $1\le i\le 3$, either $c_i=1$ or $y^\ep < c_i \le y$. By the definition of~$A_\chi$, we have $|\chi(c_i)-1| \le \frac1{\log p}$ for $1\le i\le 3$; it follows from Lemma~\ref{circle lemma} that $\Re\chi(n) = \Re\chi(c_1c_2c_3) \ge 1 + O(1/(\log p)^2)$. Therefore, by Proposition~\ref{split prop},
\begin{align}
\Re \sum_{n=1}^y \chi(n) &\ge \Re \sum_{n\in A_\chi} \chi(n) - t\log\frac{\log t}{\log y} + O\bigg( x + \frac tz(\log p)^3 + \frac t{\log t} \bigg) \notag \\
&\ge \sum_{n\in A_\chi} \bigg( 1 + O\bigg( \frac1{(\log p)^2} \bigg) \bigg) - t\log\frac{\log t}{\log y} + O\bigg( y^{3/2+\ep} + \frac t{y^\ep}(\log p)^3 + \frac t{\log t} \bigg) \notag \\
&\ge t - \#A_\chi^c + O\bigg( \frac t{(\log p)^2} \bigg) - t\log\frac{\log t}{\log y} + O\bigg( y^{3/2+\ep} + \frac t{y^\ep}(\log p)^3 + \frac t{\log t} \bigg) \notag \\
&\ge t \bigg( 1 - 2\log\frac{\log t}{\log y} \bigg) + O\bigg( \frac t{(\log p)^2} + y^{3/2+\ep} + \frac t{y^\ep}(\log p)^3 + \frac t{\log t} \bigg)
\label{lasteq}
\end{align}
by equation~\eqref{Achic size}.

Now we choose $t = y^{\sqrt{e} - \ep}$ (which is valid since $\frac32<\sqrt e<2$), so that $t>p^{(1/(4\sqrt e)+\ep)(\sqrt e-\ep)} > p^{1/4+\ep}$ since $0<\ep<\frac15$.
With this choice, the coefficient of~$t$ in the main term of the right-hand side of equation~\eqref{lasteq} is a positive constant. Moreover, the error term of the right-hand side of equation~\eqref{lasteq} is $o(t)$ by our choice of~$y$, and therefore the right-hand side is $\gg_\ep t$. However, this violates Burgess's estimate~\eqref{Burgess}, which is a contradiction. Hence, our assumption~\eqref{eq:assumption-for-improved-theorem} is false and the proof is complete.
\end{proof}

We remark that Burgess's estimate~\eqref{Burgess} holds not just for the interval $\{1,\dots,y\}$ but for any arithmetic progression of length $y$ whose common difference is not divisible by~$p$. This observation justifies the remark following Corollary~\ref{main cor}, since the rest of the proof never depended on the particular identities of the integers~$n\in\{1,\dots,y\}$.

\bibliography{refs}
\bibliographystyle{plain}

\end{document}